\newcommand{\rt}{\rightarrow}
\newcommand{\lrt}{\longrightarrow}
\newcommand{\st}{\stackrel}
\newcommand{\al}{\alpha}
\newcommand{\la}{\lambda}
\newcommand{\La}{\Lambda}
\newcommand{\Ga}{\Gamma}
\newcommand{\lan}{\langle}
\newcommand{\ran}{\rangle}
\newcommand{\D}{\mathbb{D} }
\newcommand{\CQ}{\mathcal{Q} }
\newcommand{\CT}{\mathcal{T} }
\newcommand{\CX}{\mathcal{X} }
\newcommand{\Mod}{{\rm{Mod}}}
\newcommand{\mmod}{{{\rm{mod}}}}
\newcommand{\Inj}{{\rm{Inj}}}
\newcommand{\Prj}{{\rm{Proj}}}
\newcommand{\QR}{{{\rm Rep}{(\CQ, R)}}}
\newcommand{\QLA}{{{\rm Rep}{(\CQ, \La)}}}
\newcommand{\add}{{\rm{add}}}
\newcommand{\pd}{{\rm{pd}}}
\newcommand{\derdim}{{\rm{der.dim}}}
\newcommand{\repdim}{{\rm{rep.dim}}}
\newcommand{\gldim}{{\rm{gl.dim}}}
\newcommand{\Coker}{{\rm{Coker}}}
\newcommand{\Hom}{{\rm{Hom}}}
\newcommand{\End}{{\rm{End}}}
\newtheorem{theorem}{Theorem}[section]
\newtheorem{corollary}[theorem]{Corollary}
\newtheorem{lemma}[theorem]{Lemma}
\theoremstyle{definition}
\newtheorem{remark}[theorem]{Remark}
\theoremstyle{plain}
\theoremstyle{definition}
\numberwithin{equation}{section}
\begin{document}

\title[On the Dimensions of Path Algebras]{On the Dimensions of Path Algebras}

\author[Asadollahi and Hafezi]{Javad Asadollahi and Rasool Hafezi}

\address{Department of Mathematics, University of Isfahan, P.O.Box: 81746-73441, Isfahan, Iran and School of Mathematics, Institute for Research in Fundamental Science (IPM), P.O.Box: 19395-5746, Tehran, Iran } \email{asadollahi@ipm.ir}
\email{r.hafezi@sci.ui.ac.ir}

\subjclass[2000]{16G20, 18E30, 18A40, 55U35}

\keywords{Artin algebra, triangulated category, representation dimension, derived dimension}

\thanks{The research of authors was in part supported by a grant from IPM (No. 90130216).}

\begin{abstract}
In this paper we study the representation dimension as well as the derived dimension of the path algebra of an artin algebra over a finite and acyclic quiver.
\end{abstract}

\maketitle

\section{Introduction}
The notion of representation dimension of an artin algebra $\La$, denoted $\repdim \La$, has been introduced by Auslander in his Queen Mary Notes in 1971. His hope was to provide ``a reasonable way of measuring how far the category of finitely generated modules over an artin algebra is from being of finite representation type''. Recall that an artin algebra $\La$ is called of finite representation type if, up to isomorphisms, there are only finitely many finitely generated indecomposable $\La$-modules. Such algebras are called representation finite. The importance of this class of artin algebras for the whole representation theory of artin algebras is well understood, see e.g. \cite{Au}, \cite{Ga} and \cite{Ta}.

Auslander proved that $\La$ is of finite representation type if and only if its representation dimension is at most two. But it turned out to be hard to compute the actual value of the representation dimension of a given algebra. Moreover, all the algebras that one could compute explicitly their representation dimensions, had representation dimension at most three. So a natural question arises: are there artin algebras of representation dimension greater than three. Of course such algebras are of infinite representation type. On the other hand, another question was whether this invariant is always finite. These questions has been answered positively, at almost the same time. Iyama \cite{I} in 2003 proved that it is always finite and Rouquier \cite{Rq1} in 2005 showed that any natural number $n$, except $1$, can occur as the representation dimension of some algebra. In fact, he showed that the representation dimension of the exterior algebra $\La k^n$, where $k$ is a field, is $n+1$. After it, Oppermann \cite{Opp1}, Krause and Kussin \cite{KK} and also Ringel \cite{R} constructed examples of algebras of arbitrary representation dimension.

To present his example, Rouquier introduced the notion of dimension of a triangulated category \cite{Rq2}; see also \cite{BV}. This dimension gives a new invariant for algebras and algebraic varieties under derived equivalences. Roughly speaking, it measures the minimum steps one requires to build the whole triangulated category $\CT$ out of a single object $M$, for details see the preliminaries section.

Let $\La$ be an artin algebra and $\mmod \La$ denotes the category of finitely generated (left) $\La$-modules. The dimension of the triangulated category $\D^b(\mmod \La)$, the bounded derived category, is called the derived dimension of $\La$, denoted $\derdim \La$. By Rouquier \cite{Rq2} and also Krause and Kussin \cite{KK} we know that $\derdim\La$ provides a lower bound for the representation dimension of $\La$. In fact, if $\La$ is an artin algebra and $M \in \mmod\La$ is a generator, then it is proved by Rouquier \cite[Proposition 7.4]{Rq2} that $\derdim\La \leq \gldim\End_{\La}M.$ Beligiannis \cite[Corollary 6.7]{B2} proved that if $\CX$ is a representation-finite resolving subcategory of $\mmod\La$, then $\derdim\La\leq \max\{2,\CX{\rm-}\dim(\mmod\La)\}.$

Algebras of derived dimension zero has been characterized in \cite[Theorem 12.20]{B1}, see also \cite{CYZ}. In fact, if $\La$ is a finite dimensional algebra over an algebraically closed field $k$, then $\derdim\La=0$ if and only if $\La$ is an iterated tilted algebra of Dynkin type.

A finite dimensional algebra $\La$ is called derived finite if up to shift and isomorphism there is only a finite number of indecomposable objects in $\D^b(\mmod\La)$. It is known that the hereditary algebras
\cite[Proposition 7.27]{Rq2}, the radical square zero algebras \cite[Proposition 7.37]{Rq2} and algebras of finite representation type \cite[Corollary]{H}, which are not derived finite are of derived dimension one, see
\cite[Remark 2]{H}. Orlov \cite{O} proved that a smooth quasi-projective curve has derived dimension one and Oppermann \cite{Opp2} showed that an elliptic curve and also a weighted projective line of tubular type have derived dimension one. On the other hand, Rouquier \cite{Rq2} showed that the Loewy length of a ring provide an upper bound for its derived dimension. Moreover, Krause and Kussin \cite{KK} proved that the global dimension of a ring is an upper bound for its derived dimension. Furthermore, it is known \cite{Rq2} that the dimension of the bounded derived category of perfect complexes over a ring of infinite global dimension is infinite. See \cite{OS}, for more examples of triangulated categories of infinite dimension.

In this paper, we study the above mentioned dimensions, i.e. the representation dimension and the derived dimension of path algebras of finite acyclic quivers. We show that if $\CQ$ is such a quiver, which is not of type $A_n$, then
\[\repdim\La\CQ \leq \repdim\La+5,\]
where $\La$ is an artin algebra over a commutative artin ring $k$ and $\La\CQ$ is its associated path algebra. For quivers of type $A_n$ see Remark \ref{Steffen}.

As for derived dimension, we show that, when $R$ is a coherent ring, then
\[\dim\D^b(\mmod R) \leq \dim\D^b(\mmod R\CQ) \leq 2\dim\D^b(\mmod R)+1.\]
So, in view of \cite{CYZ}, in case $k$ is an algebraically closed field and $\CQ$ is not Dynkin, we get that $\derdim k\CQ=1$. This answers, partially, a question posed by Rouquier \cite[Remark 7.29]{Rq2}.
Finally, we show that similar techniques work to prove the inequalities
\[n \leq \dim\D^b(\mmod \Ga) \leq 2n+1,\]
where $\Ga= \left( \begin{array}{ccc} R & 0 \\ M & S \end{array} \right)$ is a coherent formal triangular matrix ring and \[n=\max\{\dim\D^b(\mmod R), \dim\D^b(\mmod S)\}.\]

\section{Preliminaries}
\s {\sc Quivers and their representations.}
A quiver $\CQ$ is a directed graph. It will be denoted by a quadruple $\CQ=(\CQ_0,\CQ_1,s,t)$, where $\CQ_0$ and $\CQ_1$ are respectively the sets of vertices and arrows of $\CQ$ and $s,t:\CQ_1 \rt \CQ_0$ are two maps which associate to any arrow $\al \in \CQ_1$ its source $s(\al)$ and its target $t(\al)$, respectively. We usually denote the quiver $\CQ=(\CQ_0,\CQ_1,s,t)$ briefly by $(\CQ_0,\CQ_1)$ or even simply by $\CQ$.
A vertex $v \in \CQ_0$ is called a sink if there is no arrow $\alpha$ with $s(\alpha)=i$. $v$ is called a source if there is no arrow $\alpha$ with $t(\alpha)=v$.

A quiver $\CQ$ is said to be finite if both $\CQ_0$ and $\CQ_1$ are finite sets. A path of length $l\geq 1$ with source $a$ and target $b$ (from $a$ to $b$) is a sequence of arrows $\al_1\al_2\cdots\al_l$, where $\al_i \in \CQ_1$, for all $1\leq i \leq l$, and we have $s(\al_l)=a$, $t(\al_i)=s(\al_{i-1})$ for all $1<i\leq l$ and $t(\al_1)=b$. A path of length $l\geq 1$ is called a cycle if its source and target coincide. $\CQ$ is called acyclic if it contains no cycles. Throughout the paper we assume that $\CQ$ is a finite acyclic quiver.

A quiver $\CQ$ can be considered as a category whose objects are the vertices of $\CQ$ and morphisms are all paths in $\CQ$. Assume that $R$ is a ring. A covariant functor from $\CQ$ to $\mmod R$, the category of finitely presented $R$-modules, is called a representation of $\CQ$ in $\mmod R$. The functor category ${(\mmod R)}^{\CQ}$, whose objects are representations of $\CQ$ and morphisms are natural transformations, is known as the category of representations of $\CQ$ in $\mmod R$ and is usually denoted by $\QR$. It is known that this category is equivalent to the category of finitely presented $R\CQ$-modules, where $R\CQ$ is the path algebra of $\CQ$.

\s {\sc Evaluation functor and its adjoints.}\label{adj}
Let $v$ be a vertex of $\CQ$. Associated to $v$, there exists a functor $e^v:\QR \rt \mmod R$, called the evaluation functor, which assigns to any representation $\CX$ of $\CQ$ its module at vertex $v$, denoted $\CX_v$. It is proved in \cite{EH} that $e^v$ possesses a right and also a left adjoint, which will be denoted by $e^v_\rho$ and $e^v_\la$, respectively. In fact, $e^v_{\rho}:\mmod R \rt \QR$ is defined as follows: let $M$ be an arbitrary module in $\mmod R$. Then $e^v_{\rho}(M)_w=\prod_{\CQ(w,v)}M$, where $\CQ(w,v)$ denotes the set of paths starting in $w$ and terminating in $v$. The maps are natural projections, that is, for an arrow $a:w_1\rt w_2$, we set $e^v_{\rho}(M)_a:\prod_{\CQ(w_1,v)}M \rt \prod_{\CQ(w_2,v)}M$. The left adjoint of $e^v$ is defined similarly: for any $R$-module $M$, one defines $e^v_{\la}(M)_w=\prod_{\CQ(v,w)}M.$ The maps are natural injections.

Let $\CQ$ be as above and $v \in \CQ_0$. Based on the properties of these adjoints, one may deduce that for any projective module $P\in \Prj(R)$, the representation $e^v_{\la}(P)$ is a projective representation of $\CQ$, that is belongs to $\Prj(\QR)$. Moreover, any projective representation can be decompose into a product of such representations. In fact the set
\[\{e^v_{\la}(P) : P \in \Prj(R) \ \ {\rm{and}} \ \ v\in V\},\]
is a set of projective generators for the category $\QR$.

On the other hand, for any injective module $E \in \Inj(R)$, the representation $e^v_{\rho}(E)$, is an injective representation of $\CQ$, that is belongs to $\Inj(\QR)$. Moreover, any injective representation of $\CQ$, can be written as a coproduct of such representations. In fact, the set
\[\{e^v_{\rho}(E) : E \in\Inj(R) \ \ {\rm{and}} \ \ v\in V\},\]
is a set of injective cogenerators of $\QR$. This, in particular, implies that every representation $M$ of $\CQ$ can be embedded in a direct sum of elements of this set. The proof of these facts can be found in \cite{EE} and \cite{EER}.

The evaluation functor $e^v:\QR \rt \mmod R$ naturally extends to a triangulated functor $k^v:\D^b(\QR) \lrt \D^b(\mmod R)$ between corresponding bounded derived categories, see \cite{AEHS}. Moreover, one can sees easily that this extended functor also possesses left and right adjoints, which we will denote by $k^v_\la$ and $k^v_\rho$, respectively. In fact, these adjoints are nothing but the natural extensions of the functors $e^v_\la$ and $e^v_\rho$, respectively.

\s {\sc An exact triangle in $\D^b(\QR).$}\label{ExactTri}
Let $\CX$ be a representation of a quiver $\CQ$. It follows from the proof of \cite[Corollary 28.3]{M} that there exists an exact sequence
\[0 \rt \bigoplus_{a \in \CQ_1} e^{t(a)}_\la(\CX_{s(a)}) \rt \bigoplus_{v \in \CQ_0} e^v_\la(\CX_v) \rt \CX \rt 0,\]
in $\QR$, see also \cite[4.1]{EERI}. It is easy to check that similar sequence exists, if we replace $\CX$ with a bounded complex $X$ of representations. That is, for any bounded complex $X$ of representations of a quiver $\CQ$, we have the following short exact sequence
\[0 \rt \bigoplus_{a \in \CQ_1} k^{t(a)}_\la(X_{s(a)}) \rt \bigoplus_{v \in \CQ_0} k^v_\la(X_v) \rt X \rt 0.\]
By \cite[Chapter III, \S 3, Proposition 5]{GM}, this sequence induces the following triangle
\[\xymatrix{\bigoplus_{a \in \CQ_1} k^{t(a)}_\la(X_{s(a)}) \ar[r] & \bigoplus_{v \in \CQ_0} k^v_\la(X_v) \ar[r] & X \ar@{~>}[r] & },\]
in $\D^b(\QR).$ We shall use this triangle throughout the paper.

\s {\sc Formal triangular matrix algebras.}\label{ForTriMatAlg}
Let $R$ and $S$ be rings and $M$ be an $S$-$R$-bimodule. Let $\Ga=\left(\begin{array}{ccc} R & 0 \\
M & S \end{array} \right)$ be the formal triangular matrix ring. It is known, e.g. by \cite[Chapter III, \S 2]{ARS}, that the category $\Mod\Ga$, consisting of all (left) $\Ga$-modules, may be identified with the category $\mathfrak{C}$ consisting of all triples $(X,\,Y)_\varphi$, where $X\in\Mod R$, the category of left $R$-modules, $Y\in\Mod S$, the category of left $S$-modules and $\varphi:M\otimes_RX\lrt Y$ is an $S$-linear map.

Using this equivalence, we define two evaluation functors $e^1:\Mod\Ga\lrt\Mod R$ and $e^2:\Mod\Ga\lrt\Mod S$ by setting $e^1((X, Y)_\varphi)=X$ and $e^2((X, Y)_\varphi)=Y$, for every $\Ga$-module $(X, Y)_\varphi$. The morphisms are defined in the obvious way.

Now let $X$ be an arbitrary $R$-module. We define a functor $e^1_{\la}:\Mod R\lrt\Mod\Ga$ by setting $e^1_{\la}(X)=(X,\,M\otimes_RX)_1$, where $1:M\otimes_RX \lrt M\otimes_RX$ is the identity map. In fact,
$e^1_{\la}$ is a left adjoint of $e^1$. In another words, $(e^1_{\la},e^1)$ is an adjoint pair of functors. Similarly, we may define a functor $e^2_{\la}:\Mod S\lrt\Mod\Ga$ as follows: for any $S$-module $Y$, we define $e^2_{\la}(Y)=(0,\,Y)_0$. It is easy to check that the pair $(e^2_\lambda,\,e^2)$ is also an adjoint pair of functors.

Let $(X,Y)_{\varphi}$ be a $\Ga$-module. There exist an exact sequence
\[0 \lrt e^2_{\la}(M\otimes X) \lrt e^1_{\la}(X)\oplus e^2_{\la}(Y)_0 \lrt (X,Y)_{\varphi}\lrt 0,\]
of $\Ga$-modules. More precisely, the above sequence can be read as follows
\[0 \lrt (0,M\otimes X)_0 \st{f}{\lrt} (X,M\otimes X)_1\oplus (0,Y)_0 \st{g}{\lrt} (X,Y)_{\varphi}\lrt 0,\]
where $f=(f_1,f_2)$ is defined by $f_1=0$ and $f_2((0,\alpha))=(\alpha, \varphi(\alpha))$ and $g=(g_1,g_2)$ is defined by $g_1=1_X$ and $g_2((\alpha,\beta))=\varphi(\alpha)-\beta.$\\

As in the case of quivers, these evaluation functors and their left adjoints can naturally be extended to the corresponding homotopy categories of bounded complexes, as well as to their bounded derived categories. Let us denote the extension functors by $k^i$ and their adjoints by $k^i_{\la}$, for $i=1,2$. Similar argument implies that the above short exact sequence can be extended to any bounded complex $X$ of $\Ga$-modules. That is, for any bounded complex $\CX=((X,Y)_{\varphi})_\bullet$ of $\Ga$-modules, there exists a short exact sequence of complexes of $\Ga$-modules
\[0 \lrt k^2_{\la}(M\otimes X_\bullet)\lrt k^1_{\la}(X_\bullet)\oplus k^2_{\la}(Y_\bullet) \lrt \CX \lrt 0,\]
where $X_\bullet$ (resp. $Y_\bullet$) is the complex of $R$-modules (resp. $S$-modules) arising from the modules in the first components (second components) of the complex $\CX$. This, in particular, induces an exact triangle
\[\xymatrix{k^2_{\la}(M\otimes X_\bullet) \ar[r] & k^1_{\la}(X_\bullet)\oplus k^2_{\la}(Y_\bullet) \ar[r] & \CX \ar@{~>}[r] & },\]
in $\D^b(Mod\Ga).$

\s {\sc Representation dimension of an artin algebra.}
Let $k$ be a commutative artin ring and $\La$ be an artin algebra. Our modules are always in $\mmod\La$. We also let $D( \ )$ denote the usual duality functor $\Hom_k( \ ,k)$.

Let $M$ be a $\La$-module. We denote by $\add M$ the full subcategory of $\mmod\La$ consisting of all direct summands of finite direct sums of copies of $M$.

A $\La$-module $M$ is called a generator if $\La \in \add M$. This is equivalent to say that $\add M$ contains all projective $\La$-modules. And also is equivalent to say that for any $\La$-module $X$, there exists an epimorphism $\varphi: M' \rt X$, with $M' \in \add M$.

Dually $M$ is called a cogenerator if $D(\La) \in \add M$. As above, this is equivalent to say that $\add M$ contains all injective $\La$-modules. And also is equivalent to say that for any $\La$-module $X$, there exists a monomorphism $\psi: X \rt M'$, with $M' \in \add M$.

A module which is both a generator and a cogenerator, will be called a generator-cogenerator. Auslander observed that the global dimension of the endomorphism ring of such modules have a vital role in studying the category $\mmod\La$.
He defined the representation dimension of $\La$ to be the minimum of the global dimension of the endomorphism rings of all generator-cogenerator $\La$-modules. As we mentioned in the introduction, it characterizes algebras of finite representation type, in the sense that $\La$ is of finite representation type if and only if its representation dimension is at most $2$. There is a long list of research papers in the literature studying representation dimension of different classes of artin algebras, and until Rouquier's example, all of them where at most $3$. For good references on this subject, see \cite{Au} and \cite{R}.

\s {\sc Dimension of triangulated categories.}
Let $\CT$ be a triangulated category. Let $\CX$, $\CX_1$ and $\CX_2$ be subcategories of $\CT$. Denote by $\lan\CX\ran$ the smallest full subcategory of $\CT$ which contains $\CX$ and is closed under taking finite coproducts, direct factors, and all shifts. Also let $\CX_1 \ast \CX_2$ denote the full subcategory of $\CT$ consisting of objects $X$ such that there exists a distinguished triangle
\[\xymatrix{X_1 \ar[r] & X \ar[r] & X_2 \ar@{~>}[r] & },\]
in $\CT$ with $X_i \in \CX_i$, for $i=1,2$. Finally, set $\CX_1 \diamond \CX2:=\lan\CX_1\ast \CX_2\ran$.

Inductively one defines ${\lan \CX\ran}_0 = 0$ and ${\lan \CX\ran}_n ={\lan \CX\ran}_{n-1}\diamond \lan \CX\ran$ for $n\geq 1$. By definition, the dimension of $\CT$, denoted by $\dim(\CT)$, is the minimal integer $d\geq 0$ such that there exists an object $M \in \CT$ with $\CT={\lan M\ran}_{d+1}$, or $\infty$ when there is no such an object $M$.

\section{Representation dimension of path algebras}
Let $\La$ be an artin algebra and \begin{equation*} T_2(\La) = \left(\begin{array}{ccc} \La & 0 \\\La & \La \end{array} \right)\end{equation*}denote the ring of $2\times 2$ lower triangular matrices over $\La$. It is shown in
\cite[Theorem 7.3]{FGR} that $\repdim T_2(\La) \leq \repdim \La + 2$. Note that $T_2(\La)$ is nothing but the path algebra of the quiver $A_2: v_1\rt v_2$. This result has been generalized to any quiver
\[v_1  \rt v_2 \cdots \rt v_n,\]
of type $A_n$, for arbitrary natural number $n$, by Xi \cite[Corollary 3.6]{X1}. Let $\La$ be an artin algebra over a perfect field $k$ and
\begin{equation*} T_n(\La) = \left(\begin{array}{cccc} \La & 0 & \cdots & 0 \\ \La & \La & \cdots & 0 \\ \vdots & \vdots & \ddots & \vdots \\ \La & \La & \cdots & \La \end{array} \right)\end{equation*}
be the $n\times n$ triangular matrix algebra with entries in $\La$. Then Xi proved that $\repdim T_n(A)\leq \repdim\La + 2$.

In this section we plan to extend this result to any finite acyclic quiver, which is not of type $A_n$. More precisely, we show that if $\La$ is an artin algebra and $\CQ$ is such a quiver, then $\repdim\La\CQ \leq \repdim\La+5.$ To prove this, we need some preliminary results. We begin with a lemma.

\begin{lemma}\label{pdimpath}
Let $R$ be a ring, $\CQ$ be a quiver and $\CX$ be a representation of $\CQ$ in $\mmod R$. Assume that, for any $v \in \CQ_0$, the $R$-module $e^v(\CX)=\CX_v$ has projective dimension at most $n$. Then $\pd_{\QR}\CX \leq n+1.$
\end{lemma}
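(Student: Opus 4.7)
The plan is to feed the short exact sequence
\[0 \rt \bigoplus_{a \in \CQ_1} e^{t(a)}_\la(\CX_{s(a)}) \rt \bigoplus_{v \in \CQ_0} e^v_\la(\CX_v) \rt \CX \rt 0\]
from Section \ref{ExactTri} into a dimension-shifting argument. Write $A$ and $B$ for the left and middle terms. If I can show $\pd_{\QR} A \leq n$ and $\pd_{\QR} B \leq n$, then the segment
\[\Ext^{n+1}_{\QR}(A,N) \rt \Ext^{n+2}_{\QR}(\CX,N) \rt \Ext^{n+2}_{\QR}(B,N)\]
of the long exact sequence associated to $0 \rt A \rt B \rt \CX \rt 0$ has vanishing outer terms for every $N$, which forces $\Ext^{n+2}_{\QR}(\CX,N)=0$ and hence $\pd_{\QR}\CX \leq n+1$.

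The crux is therefore the following statement: if $M\in\mmod R$ satisfies $\pd_R M \leq n$, then $\pd_{\QR} e^v_\la(M) \leq n$. Two properties of the functor $e^v_\la:\mmod R \rt \QR$ combine to establish it. First, since $\CQ$ is finite and acyclic, the set of paths $\CQ(v,w)$ is finite for every vertex $w$, so the explicit formula $e^v_\la(M)_w = \prod_{\CQ(v,w)} M$ exhibits $e^v_\la(M)$, at each vertex, as a finite direct sum of copies of $M$. In particular $e^v_\la$ is an \emph{exact} functor. Second, $e^v_\la$ is left adjoint to the evaluation functor $e^v$, which is itself exact (exactness of a sequence of representations is tested vertexwise); a left adjoint of an exact functor preserves projective objects. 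Consequently, applying $e^v_\la$ to a projective resolution $P_\bullet \rt M$ of length $\leq n$ in $\mmod R$ produces a projective resolution $e^v_\la(P_\bullet) \rt e^v_\la(M)$ of length $\leq n$ in $\QR$, giving the desired bound.

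Specializing this with $M=\CX_v$ and with $M=\CX_{s(a)}$ (noting that the second index $t(a)$ only changes \emph{which} functor $e^{t(a)}_\la$ is applied, not the hypothesis on the $R$-module), and using that finite direct sums do not increase projective dimension, yields $\pd_{\QR} A \leq n$ and $\pd_{\QR} B \leq n$. The $\Ext$ argument of the first paragraph then completes the proof. The only substantive step is the exactness of $e^v_\la$; everything after that is routine bookkeeping.
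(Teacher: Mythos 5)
Your proposal is correct and follows essentially the same route as the paper: both rest on the short exact sequence of \ref{ExactTri} together with the observation that $e^v_\la$ sends a module of projective dimension at most $n$ to a representation of projective dimension at most $n$, after which a standard dimension shift gives $\pd_{\QR}\CX\leq n+1$. Your write-up merely makes explicit the justification (exactness of $e^v_\la$ and preservation of projectives via the adjunction $(e^v_\la,e^v)$) that the paper compresses into ``this easily implies.''
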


\begin{proof}
By \ref{ExactTri}, for any $P \in \Prj(R)$ and any $v \in \CQ_0$, $e^v_{\la}(P)$ is a projective object in the category $\QR$. This easily implies that if $M \in \mmod R$ is of projective dimension at most $n$, then $e^v_{\la}(M)$, as an object in $\QR$, has projective dimension at most $n$. Now assume that $\CX$ is an arbitrary representation of $\CQ$ such that $\pd_R\CX_v\leq n$, for all $v \in \CQ_0$. By \ref{ExactTri}, $\CX$ can be embedded into an exact sequence \[0 \rt \bigoplus_{a \in \CQ_1} e^{t(a)}_\la(\CX_{s(a)}) \rt \bigoplus_{v \in \CQ_0} e^v_\la(\CX_v) \rt \CX \rt 0,\] of representations. Hence, the result follows from the fact that the first two left terms of this sequence has projective dimension at most $n$.
\end{proof}

As a direct consequence of the above lemma, we have the following corollary.

\begin{corollary}\label{gldimRQ}
Let $R$ be a ring and $\CQ$ be an arbitrary quiver. Then \[\gldim RQ \leq \gldim R+1.\]
\end{corollary}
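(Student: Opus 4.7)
The plan is to deduce this corollary almost mechanically from Lemma \ref{pdimpath}. Assume without loss of generality that $n := \gldim R$ is finite, otherwise the inequality is vacuous. Fix an arbitrary representation $\CX \in \QR$; by the very definition of global dimension of $R$, the $R$-module $\CX_v = e^v(\CX)$ satisfies $\pd_R \CX_v \leq n$ for every vertex $v \in \CQ_0$. Lemma \ref{pdimpath} then immediately yields $\pd_{\QR} \CX \leq n+1$. Taking the supremum over all representations $\CX$ gives
\[
\gldim R\CQ = \gldim \QR \leq n + 1 = \gldim R + 1,
\]
using the equivalence $\QR \simeq \mmod R\CQ$ recalled in Section 2.

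The only point that deserves a moment's thought is whether bounding the projective dimension of every object in $\QR$ really bounds the global dimension of the ring $R\CQ$. Since $\CQ$ is finite and acyclic (the standing assumption of the paper), $R\CQ$ is module-finite over $R$ and the short exact sequence of \ref{ExactTri} makes sense verbatim for any representation (the argument quoted from \cite[Corollary 28.3]{M} does not use finite presentation). Thus the argument of Lemma \ref{pdimpath} applies equally well to arbitrary representations of $\CQ$ in $\Mod R$, and the supremum of projective dimensions of all $R\CQ$-modules is bounded by $n+1$, as claimed.

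So in effect there is no real obstacle: the corollary is just the lemma applied uniformly to all modules, and the one line to double check is that the resolution of $\CX$ produced in \ref{ExactTri} does not require $\CX$ to be finitely presented. No separate argument is needed in the infinite global dimension case, and the bound is tight in the obvious examples such as $R$ semisimple and $\CQ$ a single arrow, where $\gldim R\CQ = 1 = \gldim R + 1$.
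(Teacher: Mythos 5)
Your proposal is correct and is exactly the argument the paper intends: the paper offers no separate proof, presenting the corollary as a direct consequence of Lemma \ref{pdimpath} applied to an arbitrary representation, which is what you do. Your extra remark that the resolution from \ref{ExactTri} does not require finite presentation is a reasonable point of care that the paper glosses over, but it does not change the route.
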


\s \label{projmods} Let $\Ga=\left(\begin{array}{ccc} R & 0 \\ {}_SM_R & S \end{array} \right)$ be a formal triangular matrix ring. It follows from \cite[Corollart 1.6(c)]{FGR} that a $\Ga$-module $(X,Y)_{\varphi}$ is projective if and only if $X$ is a projective $R$-module, $\varphi:M\otimes_R X\lrt Y$ is an $S$-monomorphism and $\Coker(\varphi)$ is a projective $S$-module, see also \cite[Theorem 3.1]{HV}. This, in particular, implies that any projective $\Ga$-module can be written as a direct sum \[e^1_{\la}(P)\oplus e^2_{\la}(Q)=(P,M\otimes_RP)_1\oplus(0,Q)_0,\] where $P$ is a projective $R$-module and $Q$ is a projective $S$-module. We shall use this fact in the proof of the following lemma, see also \cite[Corollary 4.21]{FGR}.

\begin{lemma}\label{pdGAmodules}
Let $\Ga=\left(\begin{array}{ccc} R & 0 \\ {}_SM_R & S \end{array} \right)$ be a formal triangular matrix ring. Let $(X,Y)_{\varphi}$ be a $\Ga$-module for which ${}_RX$ and also ${}_SY$ have projective dimension at most $n$, for some natural number $n$. If ${}_SM$ is projective, then $\pd_{\Ga}((X,Y)_{\varphi})\leq n+1$.
\end{lemma}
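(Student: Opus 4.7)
The plan is to use a short exact sequence that splits $(X,Y)_{\varphi}$ into its ``coordinate parts'' $(0,Y)_0$ and $(X,0)_0$, and then bound the projective dimension of each part separately. The key sequence is
\[0 \lrt (0,Y)_0 \lrt (X,Y)_{\varphi} \lrt (X,0)_0 \lrt 0\]
in $\Mod\Ga$, where the first map is $(0,\mathrm{id}_Y)$ and the second is the projection $(\mathrm{id}_X,0)$ onto the first component. Exactness is immediate, since kernels and cokernels in $\Mod\Ga$ are computed componentwise. Applying the long exact sequence of $\Ext^*_\Ga(-,N)$ gives $\pd_{\Ga}(X,Y)_{\varphi}\leq \max\{\pd_{\Ga}(0,Y)_0,\ \pd_{\Ga}(X,0)_0\}$, so it suffices to bound each end by $n+1$.

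The bound $\pd_{\Ga}(0,Y)_0\leq n$ is easy. The functor $e^2_{\la}$ is exact, as one sees from the componentwise description of morphisms, and by \ref{projmods} it sends projective $S$-modules to projective $\Ga$-modules. Hence any projective $S$-resolution of $Y$ of length at most $n$ yields, after applying $e^2_{\la}$, a projective $\Ga$-resolution of $e^2_{\la}(Y)=(0,Y)_0$ of the same length.

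The heart of the argument is the bound $\pd_{\Ga}(X,0)_0\leq n+1$, and this is where the hypothesis that ${}_SM$ is projective is used. I would proceed by induction on $n=\pd_R X$. For the base case $n=0$, $X$ is a direct summand of a free module $R^{(I)}$, so it is enough to bound $\pd_{\Ga}(R^{(I)},0)_0$, and for this one uses the short exact sequence
\[0 \lrt (0,M^{(I)})_0 \lrt (R^{(I)},M^{(I)})_1 \lrt (R^{(I)},0)_0 \lrt 0.\]
Here $(R^{(I)},M^{(I)})_1 = e^1_{\la}(R^{(I)})$ is a coproduct of copies of the projective $\Ga$-module $e^1_{\la}(R)$, and $(0,M^{(I)})_0 = e^2_{\la}(M^{(I)})$ is projective precisely because ${}_SM$, and hence ${}_SM^{(I)}$, is projective (apply the characterization in \ref{projmods}). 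So $\pd_{\Ga}(R^{(I)},0)_0\leq 1$. For the inductive step, choose a short exact sequence $0\rt X'\rt P\rt X\rt 0$ in $\mmod R$ with $P$ projective and $\pd_R X'\leq n-1$, apply the exact functor $(-,0)_0:\mmod R\lrt\mmod\Ga$ to obtain a short exact sequence of $\Ga$-modules, and combine the induction hypothesis $\pd_\Ga(X',0)_0\le n$ with the base case $\pd_\Ga(P,0)_0\le 1$ to conclude $\pd_{\Ga}(X,0)_0\leq \max\{\pd_\Ga(X',0)_0+1,\pd_\Ga(P,0)_0\}\le n+1$.

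Combining these estimates yields $\pd_{\Ga}(X,Y)_{\varphi}\leq \max\{n,n+1\}= n+1$, as required. The main subtle point is that the argument deliberately avoids any direct analysis of $\pd_S(M\otimes_RX)$, which would seem to require flatness of $M$ as a right $R$-module; instead, the hypothesis ${}_SM$ projective is used only once, to guarantee that the ``first-component-only'' module $(R^{(I)},0)_0$ has projective dimension at most one, and this small input is then amplified through the initial short exact sequence and the induction on $\pd_R X$.
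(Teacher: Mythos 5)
Your argument is correct, but it takes a genuinely different route from the paper. The paper works at the level of a whole projective resolution: using the classification of projective $\Ga$-modules as $(P,M\otimes_RP)_1\oplus(0,Q)_0$, it builds such a resolution of $(X,Y)_{\varphi}$, reads off from its two components projective resolutions of ${}_RX$ and of ${}_SY$ (the latter needing ${}_SM$ projective so that each $M\otimes_RP_i$ is $S$-projective), concludes that the $n$th syzygy $(K,L)_\psi$ has both components projective, and finishes with the canonical presentation $0 \lrt (0,M\otimes_RK)_0 \lrt (K,M\otimes_RK)_1\oplus(0,L)_0 \lrt (K,L)_{\psi} \lrt 0$. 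You instead split the module itself first, via $0 \lrt (0,Y)_0 \lrt (X,Y)_{\varphi} \lrt (X,0)_0 \lrt 0$, and bound the two ends separately: $(0,Y)_0$ by exactness of $e^2_{\la}$, and $(X,0)_0$ by induction on $\pd_RX$, with the hypothesis on ${}_SM$ entering only in the base case through the same kind of canonical presentation. Both proofs use ${}_SM\in\Prj(S)$ for the same purpose (projectivity of $(0,M\otimes_RP)_0$ for $P$ projective), and neither needs flatness of $M_R$, since $M$ is only ever tensored against projectives. What your version buys is modularity: each of the three hypotheses is consumed in exactly one isolated step, and you never have to check that the second-component sequence extracted from a $\Ga$-projective resolution is itself a projective $S$-resolution of $Y$. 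What the paper's version buys is brevity once the structure of projectives is in hand, and consistency with the presentation/triangle machinery of \ref{ForTriMatAlg} used throughout the rest of the paper. One cosmetic remark: your base case does not really need to pass to free modules --- for any $P\in\Prj(R)$ the sequence $0 \lrt (0,M\otimes_RP)_0 \lrt (P,M\otimes_RP)_1 \lrt (P,0)_0 \lrt 0$ already has projective outer terms, giving $\pd_{\Ga}((P,0)_0)\leq 1$ directly.
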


\begin{proof}
By \ref{projmods}, a projective resolution of the $\Ga$-module $(X,Y)_{\varphi}$ can be written in the following form
\[\cdots \lrt (P_{n},M\otimes_RP_{n})_1\oplus(0,Q_{n})_0 \lrt \cdots \lrt (P_{0},M\otimes_RP_{0})_1\oplus(0,Q_{0})_0 \lrt (X,Y)_{\varphi} \lrt 0,\]
where $P_i\in \Prj(R)$ and $Q_i\in\Prj(S)$, for $i\geq 0$. Let $(K,L)_\psi$ be the $n$th syzygy of $(X,Y)_{\varphi}$. We show that $(K,L)_\psi$ has projective dimension at most one. The above resolution induces the following exact sequence
\[0 \lrt K \lrt P_{n-1} \lrt \cdots \lrt P_1 \lrt P_0 \lrt X \lrt 0,\]
of $R$-modules and an exact sequence
\[0 \lrt L \lrt (M\otimes_RP_{n-1})\oplus Q_{n-1} \lrt \cdots \lrt (M\otimes_RP_{1})\oplus Q_{1} \lrt (M\otimes_RP_{0})\oplus Q_{0} \lrt Y \lrt 0,\]
of $S$-modules. The projectivity of ${}_SM$ in conjunction with our assumption on the projective dimensions of $X$ and $Y$ imply that ${}_RK$ and ${}_SL$ are projective. Now consider the following short exact sequence of $\Ga$-modules, which exists in view of \ref{ForTriMatAlg},
\[0 \lrt (0,M\otimes_RK)_0 \lrt (K,M\otimes_RK)_1\oplus(0,L)_0 \lrt (K,L)_{\psi} \lrt 0.\]
Since $M \in \Prj(S)$, by using Theorem 3.1 of \cite{HV}, we deduce that the first two left terms are projective and so $\pd_{\Ga}((K,L)_{\psi}) \leq 1$. Therefore $\pd_{\Ga}((X,Y)_{\varphi})\leq n+1$.
\end{proof}

Following result can be considered as a wide generalization of the above lemma. For its proof see \cite[Corollary 4.21]{FGR}.

\begin{lemma}\label{FGRgldim}
Let $R$ and $S$ be rings and $M$ be an $S$-$R$-bimodule. Set $\Ga=\left(\begin{array}{ccc} R & 0 \\
M & S \end{array} \right).$ Then the following inequalities hold.
\[\max\{\gldim R, \ \gldim S, \ \pd_SM+1\} \leq \gldim\Ga \leq \max\{\gldim R+\pd_SM+1, \ \gldim S\}.\]
\end{lemma}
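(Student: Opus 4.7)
The plan is to split the inequality into its lower half $\max\{\gldim R, \gldim S, \pd_SM+1\} \leq \gldim\Ga$ and its upper half $\gldim\Ga \leq \max\{\gldim R+\pd_SM+1, \gldim S\}$, and in both halves exploit the evaluation functors $e^i$ and their left adjoints $e^i_\la$ from \ref{ForTriMatAlg} together with the canonical short exact sequence displayed there.

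For the lower bound I would establish the three component inequalities separately. Since $e^1$ is exact, preserves projectives (as $e^1(e^1_\la(P) \oplus e^2_\la(Q))=P$), and satisfies $e^1 \circ e^1_\la = \id$, applying $e^1$ to any projective $\Ga$-resolution of $e^1_\la(X)$ yields a projective $R$-resolution of $X$ of no greater length; hence $\pd_R X \leq \pd_\Ga e^1_\la(X) \leq \gldim\Ga$, giving $\gldim R \leq \gldim\Ga$. For $\gldim S \leq \gldim\Ga$, since $e^2_\la$ is exact and preserves projectives (by \ref{projmods}), the adjunction $(e^2_\la, e^2)$ yields the Ext isomorphism $\Ext^i_\Ga(e^2_\la(T), Z) \cong \Ext^i_S(T, e^2(Z))$; choosing $T \in \Mod S$ with $\pd_S T = \gldim S$, $N \in \Mod S$ witnessing the non-vanishing of $\Ext^{\gldim S}_S(T, N)$, and $Z = e^2_\la(N)$ exhibits a $\Ga$-module of projective dimension at least $\gldim S$. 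For $\pd_SM+1 \leq \gldim\Ga$, consider the $\Ga$-module $(R, 0)_0$: the SES of \ref{ForTriMatAlg} specializes to
\[0 \lrt e^2_\la(M) \lrt e^1_\la(R) \lrt (R, 0)_0 \lrt 0,\]
and since $e^1_\la(R)$ is $\Ga$-projective while $\pd_\Ga e^2_\la(M) = \pd_S M$ (by the preceding adjunction argument), we conclude $\pd_\Ga(R, 0)_0 = \pd_S M + 1$.

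For the upper bound I would prove the sharper statement $\pd_\Ga(X,Y)_\varphi \leq \max\{\pd_R X + \pd_S M + 1, \pd_S Y\}$ by induction on $\pd_R X$. The base case $\pd_R X = 0$ uses that $e^1_\la(X) = (X, M \otimes_R X)_1$ is then $\Ga$-projective by \ref{projmods} (its cokernel component vanishes) and that $M \otimes_R X$ is an $S$-direct summand of a power of $M$, giving $\pd_S(M \otimes_R X) \leq \pd_S M$; the SES of \ref{ForTriMatAlg} then yields the claimed bound. For the inductive step with $\pd_R X = n > 0$, take an $R$-projective presentation $0 \to K \to P \to X \to 0$ with $\pd_R K = n-1$, set $\psi := \varphi \circ (1_M \otimes (P \to X))$, and form the short exact sequence of $\Ga$-modules
\[0 \lrt (K, 0)_0 \lrt (P, Y)_\psi \lrt (X, Y)_\varphi \lrt 0.\]
Induction applied to $(K, 0)_0$ gives $\pd_\Ga(K,0)_0 \leq n + \pd_S M$, the base case applied to $(P, Y)_\psi$ gives $\pd_\Ga(P,Y)_\psi \leq \max\{\pd_S M + 1, \pd_S Y\}$, and the Ext long exact sequence of the displayed SES assembles these into $\pd_\Ga(X,Y)_\varphi \leq \max\{n + \pd_S M + 1, \pd_S Y\}$, completing the induction. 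Taking supremum over all $(X, Y)_\varphi$ gives the upper bound.

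The main obstacle is the construction of the auxiliary SES used in the inductive step: one must verify that $\psi$ is a well-defined $S$-linear bimodule map, that the projection $(P, Y)_\psi \to (X, Y)_\varphi$ is $\Ga$-linear, and (most importantly) that its kernel really has zero second component, which reduces to the identity $\psi|_{M \otimes K} = \varphi \circ (1_M \otimes 0) = 0$. Once this sequence is in place, the remainder of the argument is routine Ext bookkeeping combined with the verified base case.
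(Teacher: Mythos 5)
Your proof is correct, but there is nothing in the paper to compare it with: the authors do not prove Lemma \ref{FGRgldim} at all, they simply cite \cite[Corollary 4.21]{FGR}. What you have written is therefore a complete, self-contained derivation, and it sits naturally alongside the techniques the paper does use. Your base case is essentially the paper's Lemma \ref{pdGAmodules} specialized to $\pd_RX=0$ but \emph{without} the hypothesis that ${}_SM$ is projective, and your inductive step via the auxiliary sequence $0 \lrt (K,0)_0 \lrt (P,Y)_\psi \lrt (X,Y)_\varphi \lrt 0$ in fact proves the sharper module-level estimate $\pd_\Ga (X,Y)_\varphi \leq \max\{\pd_RX+\pd_SM+1,\ \pd_SY\}$, which subsumes and strengthens Lemma \ref{pdGAmodules}. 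The individual steps all check out: $e^1$ is exact and sends projectives to $R$-projectives by \ref{projmods}, giving $\pd_RX\leq\pd_\Ga e^1_\la(X)$; $e^2_\la$ is exact and preserves projectives, so the adjunction does upgrade to $\Ext^i_\Ga(e^2_\la(T),Z)\cong\Ext^i_S(T,e^2(Z))$ and hence $\pd_\Ga e^2_\la(T)=\pd_ST$; the kernel computation for $(P,Y)_\psi\to(X,Y)_\varphi$ reduces, as you say, to $\psi\circ(1_M\otimes\iota)=\varphi\circ(1_M\otimes(\pi\iota))=0$, and exactness of sequences of triples is componentwise, so the Ext bookkeeping is routine. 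The only caveats are degenerate ones inherited from the statement itself: the equality $\pd_\Ga(R,0)_0=\pd_SM+1$ needs $M\neq 0$ (otherwise $(R,0)_0$ is projective), and when $M$ or $M\otimes_RX$ or $Y$ vanishes one must fix the usual convention $\pd(0)=-\infty$ for the maxima to read correctly; in all such cases the inequalities you need hold vacuously, so none of this affects the argument.
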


\vspace{0.5cm}

We also need the following two technical lemmas in the proof of the theorem.

\begin{lemma}\label{End}
Let $R$ be a ring, $\CQ$ be a finite quiver and $A$ be an $R$-module. Set $\Ga=\Hom_R(A,A)$. Then \[\End(\bigoplus_{v\in\CQ_0}e^v_{\la}(A)) \cong \Ga\CQ \cong \End(\bigoplus_{v\in\CQ_0}e^v_{\rho}(A)).\]
\end{lemma}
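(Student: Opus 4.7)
The plan is to explicitly decompose the endomorphism ring via the direct sum structure and compute each component using the adjunctions from \S\ref{adj}. For the first isomorphism, I begin with the standard additive decomposition
$\End(\bigoplus_{v\in\CQ_0} e^v_\la(A)) \cong \bigoplus_{v, w \in \CQ_0} \Hom_{\QR}(e^v_\la(A), e^w_\la(A))$,
whose ring structure is given by matrix multiplication. Applying the adjunction $(e^v_\la, e^v)$ to each summand yields $\Hom_{\QR}(e^v_\la(A), e^w_\la(A)) \cong \Hom_R(A, e^w_\la(A)_v)$. Since $e^w_\la(A)_v = \prod_{\CQ(w,v)} A$ by definition, and $\CQ(w,v)$ is a finite set (as $\CQ$ is finite and acyclic), this simplifies further to $\bigoplus_{p \in \CQ(w,v)} \Ga$, where $\Ga=\End_R(A)$.

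Summing over all vertex pairs, the endomorphism ring becomes a free $\Ga$-module whose basis is in bijection with the set of all paths of $\CQ$ (including the trivial paths at vertices), matching the standard $\Ga$-module basis of the path algebra $\Ga\CQ$. The content of the proof is then to verify that the ring multiplication agrees with concatenation of paths. Unwinding the counit of the adjunction, the morphism $\phi_{\gamma, p}: e^v_\la(A) \rt e^w_\la(A)$ corresponding to a pair $(\gamma, p)$, where $\gamma \in \Ga$ and $p$ is a path from $w$ to $v$, acts at each vertex $u$ by sending the copy of $A$ indexed by a path $q \in \CQ(v,u)$ into the copy indexed by the concatenation $qp \in \CQ(w,u)$ via the endomorphism $\gamma$ of $A$. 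Composing two such morphisms then produces $\phi_{\gamma\delta,\, pp'}$, which matches the product $(\gamma\cdot p)(\delta\cdot p')$ in $\Ga\CQ$.

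The second isomorphism is proved dually: the adjunction $(e^v, e^v_\rho)$ gives $\Hom_{\QR}(e^v_\rho(A), e^w_\rho(A)) \cong \Hom_R(e^v_\rho(A)_w, A) = \Hom_R(\prod_{\CQ(w,v)} A, A) \cong \bigoplus_{\CQ(w,v)} \Ga$ (again using finiteness of $\CQ(w,v)$), and the same bookkeeping yields the algebra isomorphism with $\Ga\CQ$. The main obstacle in writing out a complete proof is keeping the conventions straight regarding the direction of paths versus the order of composition of natural transformations; once these conventions are fixed, the verification that composition of morphisms matches concatenation of paths (with coefficients multiplying in $\Ga$) is routine and follows from the explicit description of the adjunction unit and counit.
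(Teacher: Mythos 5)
Your proposal follows essentially the same route as the paper: decompose $\End(\bigoplus_{v}e^v_{\la}(A))$ as a matrix ring with $(v,w)$-entry $\Hom_{\QR}(e^v_{\la}(A),e^w_{\la}(A))$, use the adjunction to identify this entry with $\Hom_R(A,(e^w_{\la}(A))_v)\cong\bigoplus_{\CQ(w,v)}\Ga$, and match the resulting ring with $\Ga\CQ$. The only difference is that you spell out the verification that composition corresponds to path concatenation, which the paper leaves as "can be completed easily"; your argument is correct.
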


\begin{proof}
We prove the first isomorphism, the second one follows similarly. Let $\CQ$ has $n$ vertices. Then $\End(\bigoplus_{v\in\CQ_0}e^v_{\la}(A))$ can be considered as an $n$ by $n$ matrix whose $v\times w$ entry is  $E_{vw}:=\Hom_{\QR}(e^v_{\la}(A),e^w_{\la}(A))$. Using the adjoint duality $(e^v_{\la},e^v)$, we have
$E_{vw}\cong \Hom_{R}(A,(e^w_{\la}(A))_v) \cong \bigoplus_{\CQ(w,v)}\Ga$. The proof now can be completed easily.
\end{proof}

\begin{lemma}\label{vanish}
Let $R$ be a ring and $\CQ$ be a finite and acyclic quiver which is not of type $A_n$. Let $v\in S$ and $w\in \CQ_0\setminus S$, where $S$ denotes the set of all sinks. Then, for any $A\in \Mod R$, \[\Hom(e^v_{\rho}(A),e^w_{\la}(A))=0.\]
\end{lemma}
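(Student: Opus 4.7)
The plan is to show directly that, for any $f \colon e^v_\rho(A) \to e^w_\la(A)$, the set $T = \{u \in \CQ_0 : f_u = 0\}$ equals $\CQ_0$. Working from the explicit description in \ref{adj}, at each vertex $u$ one has $(e^v_\rho A)_u = \prod_{\CQ(u,v)} A$ and $(e^w_\la A)_u = \prod_{\CQ(w,u)} A$, and the component $f_u$ is an $R$-linear map between these products. Throughout I assume $\CQ$ is connected as an undirected graph; the disconnected case reduces to this, either by restriction to the component containing $v$ and $w$ or by noting that the supports of $e^v_\rho(A)$ and $e^w_\la(A)$ are disjoint.

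First I would establish that $T$ is closed under both successors and predecessors along arrows. For an arrow $a \colon u_1 \to u_2$ the commutativity $f_{u_2} \circ (e^v_\rho A)_a = (e^w_\la A)_a \circ f_{u_1}$ holds; here $(e^v_\rho A)_a$ is the projection $(x_p)_{p \in \CQ(u_1,v)} \mapsto (x_{qa})_{q \in \CQ(u_2, v)}$, which is surjective, while $(e^w_\la A)_a$ is the inclusion sending the $r$-factor to the $(ar)$-factor, which is injective. These give the two implications, and under connectedness $T$ is either empty or all of $\CQ_0$.

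Next I would argue by contradiction, assuming $T = \emptyset$. Then for every $u$ both $\CQ(u, v)$ and $\CQ(w, u)$ are nonempty, so $w$ is the unique source, $v$ the unique sink, and every vertex lies on a path from $w$ to $v$. Suppose there exist two distinct paths $p_1, p_2 \colon w \to u$ for some $u$. Writing $f_u = (f_u^{(r)})_{r \in \CQ(w, u)}$ with $f_u^{(r)} \colon \prod_{\CQ(u,v)} A \to A$, compatibility along $p_i$ combined with the fact that $(e^w_\la A)_{p_i}$ is the inclusion into the $p_i$-factor shows that $f_u^{(r)} \circ (e^v_\rho A)_{p_i} = 0$ for every $r \neq p_i$; by surjectivity of $(e^v_\rho A)_{p_i}$ this forces $f_u^{(r)} = 0$ for $r \neq p_i$. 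Running this for both $p_1 \neq p_2$ kills every component of $f_u$, contradicting $T = \emptyset$. A dual argument applies if there are two distinct paths $q_1, q_2 \colon u \to v$: decomposing $f_v$ analogously and using acyclicity (which forbids $q_1 r_1 = q_2 r_2$ when $q_1 \neq q_2$, since the unique position of $u$ on an acyclic path forces $r_1 = r_2$ and $q_1 = q_2$) yields $f_v = 0$, again a contradiction.

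Consequently, if $T = \emptyset$ then each vertex admits a unique path from $w$ and a unique path to $v$, so $\CQ$ is simultaneously an out-tree rooted at $w$ and an in-tree rooted at $v$. Then every vertex lies on the unique $w$-to-$v$ path (concatenate the unique $w$-to-$u$ and $u$-to-$v$ paths and invoke uniqueness) and no arrow can lie off this path, so $\CQ$ is linear and hence of type $A_n$, contradicting the hypothesis. Hence $T = \CQ_0$ and $f = 0$. The main obstacle is the two-paths step, which requires careful tracking of the components of $f$ indexed by path sets and makes essential use of acyclicity to rule out unintended coincidences of concatenated paths.
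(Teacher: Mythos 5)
Your argument is correct for connected quivers, and it takes a genuinely different route from the paper's, sharing only the first step. Both proofs run on the same engine: along an arrow $a\colon u_1\rt u_2$ the structure map of $e^v_{\rho}(A)$ is a split surjection and that of $e^w_{\la}(A)$ is a split injection, so the vanishing locus of a morphism is closed under passing to successors \emph{and} predecessors. From there the paper picks a source $v_1$ with $\varphi_{v_1}\neq 0$, fixes one path $\alpha$ from $v_1$ to $v$, and argues from a single commutative square attached to an arrow meeting $\alpha$ in exactly one endpoint; as written this case analysis is not exhaustive (an arrow off $\alpha$ may have both or neither endpoint on $\vartheta$), and the claimed vanishing $\xi_{ab}=0$ when $b\notin\vartheta$ is unjustified, since $b$ may still admit paths to $v$ (e.g.\ in a ``diamond''). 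You instead show that total non-vanishing forces, for every vertex $u$, a \emph{unique} path from $w$ to $u$ and a unique path from $u$ to $v$: the two-paths computations with the component decompositions of $f_u$ and $f_v$ are correct, and the acyclicity point (that $q_1r_1=q_2r_2$ forces $q_1=q_2$ and $r_1=r_2$, because a path in an acyclic quiver visits $u$ at most once) is exactly what is needed. The structural conclusion --- simultaneously an out-tree rooted at $w$ and an in-tree rooted at $v$, hence every vertex and every arrow on the unique $w$-to-$v$ path --- is sound. What your route buys is an exhaustive case analysis and a transparent role for the ``not of type $A_n$'' hypothesis; it in effect characterizes exactly when the Hom-set is nonzero.

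One caveat: your fallback reduction of the disconnected case ``by restriction to the component containing $v$ and $w$'' does not work, because that component may itself be of type $A_n$ even though $\CQ$ is not (take $\CQ$ a disjoint union of two copies of $A_2$ with $w\rt v$ in one of them); in that situation the Hom-set is genuinely nonzero and the lemma as stated fails. This is a defect of the statement itself --- the paper's proof tacitly assumes connectedness as well --- rather than of your argument: the correct hypothesis is that the connected component containing $v$ and $w$ is not of type $A_n$, and your connected-case argument proves precisely that.
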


\begin{proof}
Assume contrary that there exists a non-zero morphism $\varphi \in \Hom(e^v_{\rho}(A),e^w_{\la}(A)).$ So there exists a vertex $v_1 \in \CQ_0$ such that ${\varphi}_{v_1}: {e^v_{\rho}(A)}_{v_1} \lrt {e^w_{\la}(A)}_{v_1}$ is a non-zero map. Without loss of generality we may assume that $v_1$ is a source, i.e., there is no arrow with target $v_1$. This follows directly from the definition of $e^v_{\rho}(A)$ in view of the fact that all maps in the representation $e^v_{\rho}(A)$ are epimorphisms. Let $\alpha$ denote the path from $v_1$ to $v$ and $\vartheta=\{v_1, v_2, \cdots, v_n=v\}$ be the set of all vertices existing on the path $\alpha$. Since $v\in S$ and $\CQ$ is not of type $A_n$, there should exists an arrow $a \rt b$ in $\CQ$ that does not lie on the path $\alpha$. So either $a \in \vartheta$ and $b \not\in \vartheta$ or conversely $b \in \vartheta$ and $a \not\in \vartheta$. Assume the first case happens. Consider the diagram
\[\xymatrix{{e^v_{\rho}(A)}_{a} \ar[r]^{{\varphi}_a} \ar[d]^{\xi_{ab}} & {e^w_{\la}(A)}_{a} \ar[d]^{\zeta_{ab}}\\
{e^v_{\rho}(A)}_{b} \ar[r]^{{\varphi}_b} & {e^w_{\la}(A)}_{b} }\]
Since ${\varphi}_{v_1}\neq 0$, it follows that ${\varphi}_{a}\neq 0$. Since $\zeta_{ab}$ is an injection, we get that $\zeta_{ab}{\varphi}_{a}\neq 0$. But it follows from our assumption that $\xi_{ab}=0$, contradicts with the commutativity of the diagram. So ${\varphi}_{v_1}$ should vanish in this case.

Now assume that $b \in \vartheta$ and $a \not\in \vartheta$. The non-vanishing of ${\varphi}_{v_1}$ implies that ${e^w_{\la}(A)}_{v_1}\neq 0$ and so there should exists an arrow from $w \rt v_{1}$. Since $v_1$ is a source, we should have $w=v_{1}$. Now it is easy, using the fact that $\CQ$ is not of type $A_n$, to show that there should exits an arrow $a'\rt a$ such that $a' \in \vartheta$. So the same argument as in the previous case applies to complete the proof.
\end{proof}

Now we are ready to state and prove our main theorem in this section. Let us preface it with a remark.

\begin{remark}\label{Steffen}
Let $k$ be a perfect field and $\La_1$ and $\La_2$ be two finite dimensional $k$-algebra. It was proved by Xi \cite[Theorem 3.5]{X1} that \[\repdim(\La_1\otimes\La_2)\leq \repdim\La_1 + \repdim\La_2.\] Therefore, since $\La\CQ \cong \La \otimes k\CQ$, we may deduce that \[\repdim\La\CQ \leq \repdim\La+\repdim k\CQ.\] This implies that $\repdim\La\CQ \leq \repdim\La+3,$ because $k\CQ$ is a hereditary algebra and for such algebras it is known that
$\repdim k\CQ\leq 3$, see e.g \cite[Corollary 5.2]{X1}. But in case we work with algebras over a commutative artin ring $R$, instead of a field, $R\CQ$ is not hereditary and so we do not have such bound on the representation dimension of $R\CQ$. In the following theorem, we get a bound on the representation dimension of such algebras. We thank Steffen Oppermann for pointing out this comment.
\end{remark}

\begin{theorem}\label{main1}
Let $\La$ be an artin algebra and $\CQ$ be a finite acyclic quiver which is not of type $A_n$. Then
\[\repdim\La\CQ \leq \repdim\La+5.\]
\end{theorem}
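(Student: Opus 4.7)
The plan is to build a generator-cogenerator $N$ for $\mmod\La\CQ$ out of one for $\mmod\La$, and to bound $\gldim\End_{\La\CQ}N$ by $\repdim\La+5$. Write $d:=\repdim\La$ and pick $M\in\mmod\La$ a generator-cogenerator realizing $d$, so that $\Ga:=\End_\La M$ satisfies $\gldim\Ga=d$. My candidate is $N:=L\oplus I$ with $L:=\bigoplus_{v\in\CQ_0}e^v_\la(M)$ and $I:=\bigoplus_{v\in\CQ_0}e^v_\rho(M)$; since $\La,D\La\in\add M$, the summand $L$ contains every indecomposable projective $e^v_\la(\La)$ and $I$ contains every indecomposable injective $e^v_\rho(D\La)$ of $\La\CQ$, so $N$ is a generator-cogenerator.

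The corner rings come out nicely: by Lemma \ref{End}, $\End L\cong\Ga\CQ\cong\End I$, and Corollary \ref{gldimRQ} gives $\gldim\End L\leq d+1$ and $\gldim\End I\leq d+1$. The ring $\End_{\La\CQ}N$ is a generalized matrix ring with these two corners and two off-diagonal Hom-bimodules $\Hom(I,L)$ and $\Hom(L,I)$. The crucial structural input is Lemma \ref{vanish}: splitting $L=L_s\oplus L_n$ and $I=I_s\oplus I_n$ according to whether the indexing vertex is a sink, the $(I_s,L_n)$-block of $\Hom(I,L)$ vanishes, and this is precisely where the hypothesis $\CQ\neq A_n$ enters. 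One also observes that $e^v_\la(M)$ is supported only at $v$ when $v$ is a sink, which yields additional zero blocks.

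With these vanishings in hand, the plan is to reorder the indecomposable summands of $N$ so that $\End N$ is presented as an iterated formal triangular matrix ring whose diagonal entries are copies of $\Ga\CQ$ and whose off-diagonal bimodule pieces have projective dimension bounded via Lemmas \ref{pdimpath} and \ref{pdGAmodules}. Two successive applications of Lemma \ref{FGRgldim}, with bimodule estimates of projective dimension at most one at each step, should combine with the base bound $\gldim\Ga\CQ\leq d+1$ to yield $\gldim\End_{\La\CQ}N\leq (d+1)+4=d+5$.

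The hard part will be the precise identification of the iterated triangular structure of $\End N$ and the accompanying projective-dimension estimates on the off-diagonal bimodules. The asymmetry of Lemma \ref{vanish}---it kills part of $\Hom(I,L)$ but not $\Hom(L,I)$---means that either additional summands must be added to $N$ to turn the remaining cross-Homs into a triangular pattern, or the bimodule resolutions must be computed more delicately using the triangle from \ref{ExactTri} to reduce Hom-computations to componentwise ones over $\Ga$. This bookkeeping is where I expect essentially all of the technical work to lie.
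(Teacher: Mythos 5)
Your choice of generator--cogenerator is the same as the paper's (the direct sum of all $e^v_\la(M)$ and all $e^v_\rho(M)$), and you have correctly identified the key lemmas, but the proposal stops exactly where the proof's real content lies, and the gap is genuine. Your two-block splitting $N=L\oplus I$ does not give a triangular matrix ring: as you yourself note, Lemma \ref{vanish} only kills part of $\Hom(I,L)$ while $\Hom(L,I)$ survives, so $\End N$ in this presentation is a genuinely non-triangular generalized matrix ring and Lemma \ref{FGRgldim} cannot be applied to it. Saying that ``either additional summands must be added or the resolutions must be computed more delicately'' is naming the problem, not solving it; the specific regrouping that makes the argument work is the missing idea.

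The paper's resolution is a three-block decomposition that mixes the two adjoints according to sinks: $\CX_1=\bigoplus_{v\in S}e^v_\la(A)$, $\CX_2=\bigoplus_{v\notin S}e^v_\la(A)\oplus\bigoplus_{v\in S}e^v_\rho(A)$, $\CX_3=\bigoplus_{v\notin S}e^v_\rho(A)$, where $S$ is the set of sinks. With this ordering the four Hom-sets $\Hom(\CX_1,\CX_3)$, $\Hom(\CX_2,\CX_1)$, $\Hom(\CX_3,\CX_1)$, $\Hom(\CX_3,\CX_2)$ all vanish (one of them by Lemma \ref{vanish}, the others by direct computation using Lemma \ref{End} and the supports of $e^v_\la$, $e^v_\rho$ at sinks), so $\End(\overline\CX)$ is lower triangular with corners $\prod_{|S|}\Ga$, a $2\times2$ triangular ring $\left(\begin{smallmatrix}\Ga\CQ' & 0\\ M & \prod_{|S|}\Ga\end{smallmatrix}\right)$, and $\Ga\CQ'$, where $\CQ'$ is the full subquiver on the non-sinks --- not copies of $\Ga\CQ$ as you guess. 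Your bookkeeping is also off at one point: the off-diagonal bimodule $\Hom(\CX_1,\CX_2)$ has projective dimension at most $2$ over $\End(\CX_2)$ (via Lemma \ref{pdGAmodules}), not at most $1$; the paper's count is $\gldim\Sigma\le n+3$ for the first two blocks and then $n+5$ after adjoining the third, so the final constant $5$ is reached by a different distribution of the increments than the $(d+1)+2+2$ you propose. Until the triangular structure and these projective-dimension estimates are actually established, the proposal does not constitute a proof.
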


\begin{proof}
Assume that $\repdim\La=n$. By definition, there exists a generator-cogenerator $A \in \mmod\La$ such that $\gldim\End(A)=n.$ Let $S \subseteq \CQ_0$ be the set of all sinks in $\CQ$. In the sequel, just for simplicity, we shall write $V$ instead of $\CQ_0$ to denote the set of vertices of $\CQ$. Consider the following representations
\[\CX_1=\bigoplus_{v \in S}e^v_{\la}(A), \,\,\,\,\,\,\,\,\,\, \CX_2=\bigoplus_{v \in V\setminus S}e^v_{\la}(A)\oplus \bigoplus_{v\in S}e^v_{\rho}(A),\,\,\,\,\,\,\,\,\,\, \CX_3=\bigoplus_{v \in V\setminus S}e^v_{\rho}(A),\]
and set \[\overline{\CX}=\CX_1 \bigoplus \CX_2 \bigoplus \CX_3.\]
The structure of projective and injective representations of $\CQ$ described in \ref{adj}, implies easily that
$\overline{\CX}$ is a generator-cogenerator for $\La\CQ$. We show that $\gldim\End(\overline{\CX})\leq n+5$.

An easy computation of morphisms in $\QLA$ show that the following four Hom-sets vanish.
\[\Hom_{\La\CQ}(\CX_1,\CX_3)=\Hom_{\La\CQ}(\CX_2,\CX_1)=\Hom_{\La\CQ}(\CX_3,\CX_1)=\Hom_{\La\CQ}(\CX_3,\CX_2)=0.\]
Thus
\[\End(\overline A)=\End(\CX_1\oplus \CX_2 \oplus \CX_3)\cong  \left( \begin{array}{ccc}
{\End(\CX_1)} & 0 & 0 \\ \Hom(\CX_1,\CX_2) & \End(\CX_2) & 0 \\ 0 & \Hom(\CX_2,\CX_3) & \End(\CX_3) \end{array} \right).\]
We will compute the remaining Hom sets using the adjoint properties and also the structure of functors $e^v, e^v_{\la}$ and $e^v_{\rho}$, in conjunction with the above lemma. Set $\Ga=\Hom_{\La}(A,A)$. It follows from Lemma \ref{End} that $\End(\CX_1)\cong\prod_{|S|}\Ga$ and $\End(\CX_3)\cong\Ga\CQ',$ where $\CQ'$ is the subquiver of $\CQ$ with $V(\CQ')=V(\CQ)\setminus S$. To compute $\End(\CX_2)$, we should compute four Hom-sets. Lemma \ref{End} implies that the first Hom set, which is
\[\Hom(\oplus_{v \in V\setminus S}e^v_{\lambda}(A),\oplus_{v \in V\setminus S}e^v_{\lambda}(A))\] is isomorphic to the path algebra $\Ga\CQ'$, where as above $\CQ'$ is the full subquiver of $Q$ with $V(\CQ')=V(\CQ)\setminus S$. Lemma \ref{vanish} implies that the second one, that is \[\Hom(\oplus_{v\in S}e^v_{\rho}(A),\oplus_{v \in V\setminus S}e^v_{\lambda}(A))\] should be zero. Let us temporarily set \[M:=\Hom(\oplus_{v \in V\setminus S}e^v_{\lambda}(A),\oplus_{v\in S}e^v_{\rho}(A)).\]
Finally, we use Lemma \ref{End} once more, to obtain the forth Hom set
\[\Hom(\oplus_{v\in S}e^v_{\rho}(A),\oplus_{v\in S}e^v_{\rho}(A))\cong \prod_{|S|}\Ga.\]
Therefore
\[\End(\CX_2)= \left(\begin{array}{ccc}\Ga\CQ' & 0 \\ M & \prod_{|S|}\Ga\end{array} \right).\]

Now, we plan to discuss their global dimensions. It is clear that $\gldim\End(\CX_1)=\gldim\Ga=n$. As for $\End{\CX_2}$, first note that $M$ as a left $\prod_{|S|}\Ga$-module is projective. This follows from the fact that $M$ is isomorphic to some coproduct of copies of $\Ga$. On the other hand, it follows from Lemma \ref{pdimpath}, that $\gldim(\Ga\CQ')$, is less than or equal to $\gldim\Ga+1=n+1$. Therefore, in view of Lemma \ref{FGRgldim},
\[\gldim\End(\CX_2)\leq \max\{n+1+0+1, \ n\}=n+2.\]
Since $\End(\CX_3)\cong\Ga\CQ',$ Lemma \ref{pdimpath} implies that $\gldim\End(\CX_3)\leq \gldim\Ga+1=n+1.$

To continue, set
\[\Sigma=  \left(\begin{array}{ccc}{\End(\CX_1)}& 0 \\\Hom(\CX_1,\CX_2) & \End(\CX_2)\end{array} \right).\]
To be able to apply Lemma \ref{FGRgldim} to get the global dimension of $\Sigma$, we have to compute the projective dimension of $\Hom(\CX_1,\CX_2)$ as a left $\End(\CX_2)$-module. By \ref{ForTriMatAlg}, $\Hom(\CX_1,\CX_2)$ as a module over $\End(\CX_2)$ can be represented as a triple $(X,Y)_{\varphi}$. $X$ as a representation over $\CQ'$ is projective in each vertex so by Lemma \ref{pdimpath}, $\pd_{\Ga Q'}X\leq 1$. Moreover, $Y$ is projective over $\prod_{|S|}\Ga$. Hence by Lemma \ref{pdGAmodules} the projective dimension of $(X,Y)_{\varphi}$ over $\End(\CX_2)$ is at most $2$.
Therefore Lemma \ref{FGRgldim} implies that
\[\begin{array}{lll} \gldim\Sigma & \leq \max\{\gldim\End(\CX_1)+\pd_{\End(\CX_2)}\Hom(\CX_1,\CX_2)+1, \ \gldim\End(\CX_2)\}\\ & \leq \max\{n+2+1, \ n+2\}\\ & = n+3.\end{array}\]

Finally, to compute the global dimension of $\End(\overline{\CX})$ first we note that $\Hom(\CX_2,\CX_3)$, as a representation over $\End(\CX_3)\cong\Ga\CQ'$, has projective dimension at most one. This follows from Lemma \ref{pdimpath}, in view of the fact that $\Hom(\CX_2,\CX_3)$ as a representation over $\Ga\CQ'$ is projective at each vertex. Therefore,
\[\begin{array}{lll} \gldim\End(\overline{\CX}) & \leq \max\{\gldim\Sigma+\pd_{\End(\CX_3)}\Hom(\CX_2,\CX_3)+1, \ \gldim\End(\CX_3)\}\\ &
\leq \max\{n+3+1+1, \ n+1\} \\ & = n+5.\end{array}\]
The proof is now complete.
\end{proof}

\begin{remark}
Assume that $\CQ$ is a quiver of type $D_4$. If we define $\CX_1$ and $\CX_2$ as above and consider the Hom set $\Hom(\CX_1,\CX_2)$, it is easy to see that it is not projective as an $\End(\CX_2)$-module.
\end{remark}

\section{Derived dimension of path algebras}
In this section, we compare the dimension of the bounded derived category of the path algebra $RQ$, where $R$ is a coherent ring and $Q$ is a finite and acyclic quiver, with the dimension of the bounded derived category of $R$-modules.

The following lemma is known, see \cite[Lemma 3.4]{Rq2}.

\begin{lemma}\label{denceimage}
Let $F : \CT \rightarrow \CT'$ be a triangulated functor such that any object in $\CT'$ is isomorphic to an object in the image of $F$. Then $\dim\CT' \leq \dim\CT$. In fact, if $\CT={\lan\CX\ran}_d$, then $\CT'={\lan F(\CX)\ran}_d$.
\end{lemma}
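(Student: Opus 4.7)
The plan is to prove the refined statement first, namely that $F(\langle \CX\ran_d)\subseteq \langle F(\CX)\ran_d$ for every subcategory $\CX\subseteq\CT$ and every $d\geq 0$, by induction on $d$. The inequality on dimensions then follows formally: if $\CT=\langle M\ran_{d+1}$ with $d=\dim\CT$, then any object $Y\in\CT'$ is isomorphic to $F(X)$ for some $X\in\CT$, hence $Y\cong F(X)\in \langle F(M)\ran_{d+1}$ by the inclusion just proved, so $\CT'=\langle F(M)\ran_{d+1}$ and $\dim\CT'\leq d$.

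Step one (the base case and a preliminary observation). Because $F$ is a triangulated functor, it commutes with the shift functor, preserves finite coproducts (they are finite homotopy colimits, built from triangles), and preserves direct summands (if $Z\cong X\oplus Y$, then $F(Z)\cong F(X)\oplus F(Y)$, so $F(X)$ is a direct summand of $F(Z)$). Consequently, if $\CY\subseteq\CT$ is any subcategory, then $F(\langle\CY\ran)\subseteq\langle F(\CY)\ran$, since the right hand side is closed under the three operations defining $\langle-\ran$ and contains $F(\CY)$. In particular the inclusion $F(\langle\CX\ran_1)\subseteq \langle F(\CX)\ran_1$ holds.

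Step two (behaviour of $F$ with respect to the operation $\ast$). If $X\in\CX_1\ast\CX_2$ then there is a distinguished triangle
\[\xymatrix{X_1\ar[r] & X\ar[r] & X_2\ar@{~>}[r] & },\]
in $\CT$ with $X_i\in\CX_i$. Applying $F$ produces a distinguished triangle in $\CT'$ with $F(X_i)\in F(\CX_i)$, so $F(X)\in F(\CX_1)\ast F(\CX_2)$. Combining with the observation in step one, we get
\[F(\CX_1\diamond\CX_2)=F(\langle\CX_1\ast\CX_2\ran)\subseteq \langle F(\CX_1)\ast F(\CX_2)\ran \subseteq \langle F(\CX_1)\ran\diamond\langle F(\CX_2)\ran.\]

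Step three (the induction). Assume inductively that $F(\langle\CX\ran_{n-1})\subseteq \langle F(\CX)\ran_{n-1}$. Using the definition $\langle\CX\ran_n=\langle\CX\ran_{n-1}\diamond\langle\CX\ran$ and the inclusion of step two,
\[F(\langle\CX\ran_n)\subseteq F(\langle\CX\ran_{n-1})\diamond F(\langle\CX\ran)\subseteq \langle F(\CX)\ran_{n-1}\diamond\langle F(\CX)\ran=\langle F(\CX)\ran_n,\]
which completes the induction. I do not foresee a genuine obstacle here; the only point that requires a moment of care is the closure under direct summands hidden in $\langle-\ran$, but this is automatic because summands in $\CT$ are mapped to summands in $\CT'$ by any additive functor, and we are anyway taking the summand closure on the target side as well.
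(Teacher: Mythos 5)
Your argument is correct and is the standard one: the paper itself gives no proof of this lemma (it simply cites Rouquier, \emph{Dimension of triangulated categories}, Lemma 3.4), and your induction showing $F(\lan\CX\ran_d)\subseteq\lan F(\CX)\ran_d$ via compatibility of $F$ with $\lan-\ran$ and with the operation $\ast$ is exactly the expected route, with essential surjectivity then forcing $\CT'=\lan F(\CX)\ran_d$. The only point worth making explicit is that $\lan F(\CX)\ran_d$ is closed under isomorphism (being closed under direct factors as a full subcategory), which is what lets you pass from ``$Y$ is isomorphic to some $F(X)$'' to ``$Y\in\lan F(\CX)\ran_d$''; you use this implicitly and it is harmless.
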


Recall that a (left) coherent ring is a ring in which every finitely generated left ideal is finitely presented. Being coherent is equivalent to say that the category $\mmod R$ of finitely presented right $R$-modules is abelian.

\begin{theorem}
Let $R$ be a coherent ring and $\CQ$ be a finite acyclic quiver. Then
\[\dim\D^b(\mmod R) \leq \dim\D^b(\mmod R\CQ) \leq 2\dim\D^b(\mmod R)+1.\]
\end{theorem}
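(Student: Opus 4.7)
The plan is to prove the two inequalities separately, in both cases exploiting the adjunction machinery of \ref{adj} and the distinguished triangle of \ref{ExactTri}.

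For the lower bound I would fix any vertex $v \in \CQ_0$ and apply Lemma~\ref{denceimage} to the extended evaluation functor $k^v : \D^b(\mmod R\CQ) \rt \D^b(\mmod R)$. The key point is that since $\CQ$ is acyclic, the only path from $v$ to $v$ is the trivial one, so $\CQ(v,v)$ is a singleton and the formula $e^v_\la(M)_v = \prod_{\CQ(v,v)} M$ gives $e^v \circ e^v_\la \cong \id_{\mmod R}$. Passing to bounded derived categories, $k^v \circ k^v_\la \cong \id_{\D^b(\mmod R)}$, so every object of $\D^b(\mmod R)$ is isomorphic to one in the image of $k^v$. Lemma~\ref{denceimage} then delivers $\dim \D^b(\mmod R) \leq \dim \D^b(\mmod R\CQ)$.

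For the upper bound, set $n = \dim \D^b(\mmod R)$ and choose $M \in \D^b(\mmod R)$ with $\D^b(\mmod R) = \langle M \rangle_{n+1}$. I propose the generator
\[\overline{M} = \bigoplus_{v \in \CQ_0} k^v_\la(M),\]
and claim that $\D^b(\mmod R\CQ) = \langle \overline{M} \rangle_{2n+2}$, which yields $\dim \D^b(\mmod R\CQ) \leq 2n+1$. Given an arbitrary bounded complex $X$ of representations, the triangle of \ref{ExactTri} presents $X$ as an extension built from $\bigoplus_{a \in \CQ_1} k^{t(a)}_\la(X_{s(a)})$ and $\bigoplus_{v \in \CQ_0} k^v_\la(X_v)$. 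Each vertex component $X_w = k^w(X)$ lies in $\D^b(\mmod R) = \langle M \rangle_{n+1}$; since $k^v_\la$ is a triangulated functor commuting with shifts and finite coproducts, a short induction on $k$ shows $k^v_\la(\langle M \rangle_k) \subseteq \langle k^v_\la(M) \rangle_k \subseteq \langle \overline{M} \rangle_k$, the last inclusion because $k^v_\la(M)$ is a direct summand of $\overline{M}$. Therefore both outer terms of the triangle lie in $\langle \overline{M} \rangle_{n+1}$, and after rotating the triangle one obtains
\[X \in \langle \overline{M} \rangle_{n+1} \ast \langle \overline{M} \rangle_{n+1} \subseteq \langle \overline{M} \rangle_{n+1} \diamond \langle \overline{M} \rangle_{n+1} \subseteq \langle \overline{M} \rangle_{2n+2}.\]

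The only non-formal ingredient is the standard inclusion $\langle \CX \rangle_a \diamond \langle \CX \rangle_b \subseteq \langle \CX \rangle_{a+b}$, which follows from associativity of $\diamond$ by induction and is a well-known property of the Rouquier filtration. The verification that $k^v_\la$ preserves the filtration $\langle M \rangle_k$ is equally formal, based only on it being triangulated and compatible with finite direct sums and summands. Thus the substantive content of the proof rests on two observations: (a) acyclicity of $\CQ$ forces $k^v \circ k^v_\la \cong \id$, which drives the lower bound, and (b) the triangle of \ref{ExactTri} reduces every object of $\D^b(\mmod R\CQ)$ to a two-step extension of objects of the form $k^v_\la(X_v)$, which drives the upper bound. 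I expect no real obstacle beyond bookkeeping these indices carefully.
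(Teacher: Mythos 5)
Your proposal is correct and follows essentially the same route as the paper: density of $k^v$ for the lower bound via Lemma~\ref{denceimage}, and for the upper bound the generator $\bigoplus_{v\in\CQ_0}k^v_\la(M)$ together with the induction showing $k^v_\la(\lan M\ran_i)\subseteq\lan\bigoplus_v k^v_\la(M)\ran_i$ and the triangle of \ref{ExactTri}. The only (harmless) difference is that you explicitly justify density via $k^v\circ k^v_\la\cong\mathrm{id}$ using acyclicity, where the paper simply asserts it.
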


\begin{proof}
Let $v \in \CQ_0$ be arbitrary and consider the functor $k^v:\D^b(\mmod R\CQ) \rt \D^b(\mmod R)$. It is clear that this functor satisfies the statement of Lemma \ref{denceimage} and so we may deduce that $\dim\D^b(\mmod R) \leq \dim\D^b(\mmod R\CQ)$. If $\dim\D^b(\mmod R)=\infty$, the proof is complete. So assume that $\dim\D^b(\mmod R)=n$  is finite. Let $X \in \D^b(\mmod R)$ be such that $\D^b(\mmod R)={\lan X\ran}_{n+1}$. We show that $\D^b(\mmod R \CQ)={\lan \bigoplus_{v \in \CQ_0} k^v_{\la}(X)\ran}_{2n+2}$. To this end, first we claim that for $0 \leq i\leq n+1$ and $v \in \CQ_0$,
\[k^v_{\la}({\lan X\ran}_i)\subseteq \lan\bigoplus_{v \in \CQ_0}k^v_{\la}(X)\ran_i\]
For $i=0,1$ it is clear by definition. So assume inductively that $i>2$ and the result has been proved for smaller value than $i$. Let $C \in {\lan X\ran}_i$. By definition $C$ is a direct summand of an object $M$, for which there is a triangle
\[\xymatrix{I \ar[r] & M \ar[r] & J \ar@{~>}[r] & },\]
with $I \in {\lan X\ran}_{i-1}$ and $J \in \lan X\ran$. By applying the functor $k^v_{\la}$ on the above triangle, we get the exact triangle
\[\xymatrix{k^v_{\la}(I) \ar[r] & k^v_{\la}(M) \ar[r] & k^v_{\la}(J) \ar@{~>}[r] & },\]
in $\D^b(\mmod R)$. But by induction hypotheses $k^v_{\la}(I) \in {\lan \bigoplus_{v \in \CQ_0} k^v_{\la}(X)\ran}_{i-1}$ and by definition $k^v_{\la}(J) \in {\lan\bigoplus_{v \in \CQ_0}k^v_{\la}(X)\ran}$. Therefore, since $k^v_{\la}(C)$ is a direct summand of $k^v_{\la}(M)$, it follows that $k^v_{\la}(C) \in {\lan\bigoplus_{v \in \CQ_0}k^v_{\la}(X)\ran}_i$.
In particular, for $i=n+1$,
\[k^v_{\la}(\D^b(\mmod R))=k^v_{\la}({\lan X\ran}_{n+1}) \subseteq
\lan\bigoplus_{v \in \CQ_0}k^v_{\la}(X)\ran_{n+1}.\]
The above discussion implies that for any $X \in \D^b(\mmod R)$ and any $v \in \CQ_0$, $k^v_{\la}(X) \in {\lan\bigoplus_{v \in \CQ_0}k^v_{\la}(X)\ran}_{n+1}$.

On the other hand, by \ref{ExactTri}, any complex $ \CX \in \D^b({\mmod R \CQ})$ fits in to triangle
\[\xymatrix{\bigoplus_v k^v_{\la}(\CX_v) \ar[r] & \CX \ar[r] & \sum \bigoplus_a k^{t(a)}_{\la}(\CX_{s(a)}) \ar@{~>}[r] & }.\]
Therefore $\D^b({\mmod R\CQ})=\lan{\lan\bigoplus_{v \in \CQ_0}k^v_{\la}(X)\ran}_{n+1}\ast{\lan\bigoplus_{v \in \CQ_0}k^v_{\la}(X)\ran}_{n+1}\ran$. The associativity of the operation $\ast$, implies that
$\D^b({\mmod R\CQ})=\lan{\lan\bigoplus_{v \in \CQ_0}k^v_{\la}(X)\ran}_{2n+1}\ast \lan X\ran\ran$. This finishes the proof.
\end{proof}

Let $\La$ be a finite dimensional algebra over an algebraically closed field $k$. It is proved in
\cite[Theorem 12.20]{B1} that $\derdim\La=0$ if and only if $\La$ is an iterated tilted algebra of Dynkin type, see also \cite{CYZ}. Using this fact, the following corollary follows immediately from the proof of the above theorem.

\begin{corollary}
Let $k$ be an algebraically closed field and $\CQ$ be a (finite and acyclic) quiver.
If $\CQ$ is not a Dynkin quiver, then $\dim\D^b(\mmod k\CQ)=1$.
\end{corollary}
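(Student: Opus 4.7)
The plan is to combine the just-proved sandwich inequality with the characterization of algebras of derived dimension zero recalled just before the corollary. Setting $R=k$ (a field, which is certainly coherent) in the theorem yields
\[
\dim\D^b(\mmod k)\;\leq\;\dim\D^b(\mmod k\CQ)\;\leq\;2\dim\D^b(\mmod k)+1.
\]
So the first task is to identify $\dim\D^b(\mmod k)$. Every object of $\D^b(\mmod k)$ is isomorphic to a finite direct sum of shifts of $k$ (since $k$ is semisimple and every complex of $k$-vector spaces is quasi-isomorphic to the direct sum of its cohomologies), so the whole category is built from $k$ in a single step. Hence $\dim\D^b(\mmod k)=0$, and the sandwich collapses to
\[
0\;\leq\;\dim\D^b(\mmod k\CQ)\;\leq\;1.
\]

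It therefore suffices to rule out the value $0$, which is exactly where the cited result of Beligiannis \cite[Theorem 12.20]{B1} enters. By that theorem, $\derdim k\CQ=0$ would force $k\CQ$ to be an iterated tilted algebra of Dynkin type. The key step in the argument is to observe that this cannot happen for a non-Dynkin acyclic $\CQ$: the path algebra $k\CQ$ is hereditary, so any algebra derived equivalent to $k\CQ$ has global dimension at most one, and a standard result on iterated tilted algebras of Dynkin type gives that such an algebra is derived equivalent to $k\CQ'$ for some Dynkin quiver $\CQ'$; but two hereditary algebras $k\CQ$ and $k\CQ'$ are derived equivalent only if their underlying graphs coincide up to the Dynkin/Euclidean trichotomy, and in particular $\CQ$ would have to be Dynkin, contradicting the hypothesis. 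Equivalently one can note that $k\CQ$ with $\CQ$ non-Dynkin is not of finite representation type, while iterated tilted algebras of Dynkin type are representation finite (and derived equivalence preserves this within hereditary algebras).

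The anticipated obstacle is not in the inequality coming from the theorem — that is immediate — but in the derived-equivalence step that excludes $\derdim k\CQ = 0$ for non-Dynkin $\CQ$. Once this step is in place, combining $\derdim k\CQ \geq 1$ with the upper bound $\derdim k\CQ \leq 1$ from the theorem yields $\dim\D^b(\mmod k\CQ)=1$, completing the proof.
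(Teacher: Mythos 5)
Your proposal is correct and follows essentially the same route as the paper: the sandwich inequality from the preceding theorem gives $\dim\D^b(\mmod k\CQ)\leq 1$ since $\dim\D^b(\mmod k)=0$, and the value $0$ is excluded because a non-Dynkin path algebra cannot have derived dimension zero. The only difference is that the paper simply cites \cite{CYZ} for this last exclusion, whereas you unwind the citation by arguing explicitly (via derived equivalence of hereditary algebras, or representation-finiteness) that $k\CQ$ is not an iterated tilted algebra of Dynkin type; both of your suggested arguments for that step are sound.
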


\begin{proof}
Clearly $\dim\D^b(\mmod k)=0$. So it follows from the above theorem that $\dim\D^b(\mmod k\CQ)\leq 1$. If $\CQ$ is not Dynkin, by \cite{CYZ}, $\dim\D^b(\mmod k\CQ)\neq 0$ and hence it has to be one. The proof is hence complete.
\end{proof}

\begin{remark}
Let $k$ be an algebraically closed field and $\CQ$ be a Dynkin quiver. Then $\derdim k\CQ=0$ while $\gldim k\CQ=1$. This provide an example of an algebra of finite global dimension whose global dimension is strictly larger than the derived dimension.
\end{remark}

A similar technique, as we used in the proof of the above theorem, can be applied for the derived dimension of a formal triangular matrix ring.

Let $\Ga= \left( \begin{array}{ccc} R & 0 \\ M & S \end{array} \right)$ be a formal triangular matrix ring, where $R$ and $S$ are two rings and ${}_SM_R$ is an $S$-$R$-bimodule. By \cite[Corollary 2.3]{FGR}, we have a complete characterization of $\Ga$ when it is (left) coherent. In particular, it is shown that if $\Ga$ is coherent, then so are $R$ and $S$. So we may record the following result.

\begin{theorem}
Let $\Ga= \left( \begin{array}{ccc} R & 0 \\ M & S \end{array} \right)$ be a coherent formal triangular matrix ring. Set $n=\max\{\dim\D^b(\mmod R), \dim\D^b(\mmod S)\}$. Then
\[n \leq \dim\D^b(\mmod \Ga) \leq 2n+1.\]
\end{theorem}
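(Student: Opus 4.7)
The plan is to mirror the proof of the previous theorem, replacing the quiver machinery by the pair of evaluation functors $k^1,k^2$ and their left adjoints $k^1_\la, k^2_\la$ from \ref{ForTriMatAlg}, together with the canonical triangle recorded there. For the lower bound, I would apply Lemma \ref{denceimage} to each of $k^1:\D^b(\mmod\Ga)\rt\D^b(\mmod R)$ and $k^2:\D^b(\mmod\Ga)\rt\D^b(\mmod S)$. Directly from the formulas $k^1_\la(X)=(X,M\otimes_R X)_1$ and $k^2_\la(Y)=(0,Y)_0$ one reads off $k^i\circ k^i_\la\cong \id$, so each $k^i$ is essentially surjective; hence $\dim\D^b(\mmod R)\leq\dim\D^b(\mmod\Ga)$ and $\dim\D^b(\mmod S)\leq\dim\D^b(\mmod\Ga)$, giving $n\leq\dim\D^b(\mmod\Ga)$.

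For the upper bound, assume $n$ is finite (otherwise there is nothing to prove). Choose $X\in\D^b(\mmod R)$ and $Y\in\D^b(\mmod S)$ with $\D^b(\mmod R)=\lan X\ran_{n+1}$ and $\D^b(\mmod S)=\lan Y\ran_{n+1}$, and set $G:=k^1_\la(X)\oplus k^2_\la(Y)$. As in the proof for $R\CQ$, a routine induction on $i$, using that $k^1_\la$ and $k^2_\la$ are triangulated and preserve direct summands, should yield
\[k^1_\la(\lan X\ran_i)\subseteq\lan G\ran_i\quad\text{and}\quad k^2_\la(\lan Y\ran_i)\subseteq\lan G\ran_i\]
for every $0\leq i\leq n+1$; in particular $k^1_\la$ sends all of $\D^b(\mmod R)$ and $k^2_\la$ sends all of $\D^b(\mmod S)$ into $\lan G\ran_{n+1}$. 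Now, for an arbitrary $\CX=((X_\bullet,Y_\bullet)_\varphi)\in\D^b(\mmod\Ga)$, the triangle of \ref{ForTriMatAlg}
\[\xymatrix{k^2_\la(M\otimes_R X_\bullet)\ar[r] & k^1_\la(X_\bullet)\oplus k^2_\la(Y_\bullet)\ar[r] & \CX\ar@{~>}[r] &}\]
has both outer terms in $\lan G\ran_{n+1}$, so $\CX\in\lan G\ran_{n+1}\diamond\lan G\ran_{n+1}\subseteq\lan G\ran_{2n+2}$ by the associativity of $\ast$. This gives $\dim\D^b(\mmod\Ga)\leq 2n+1$.

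The main obstacle, and the only point where anything beyond formalism is needed, is to check that $M\otimes_R X_\bullet$ genuinely lives in $\D^b(\mmod S)$, so that $k^2_\la(M\otimes_R X_\bullet)$ is defined and the argument above applies to it. This is precisely where the coherence hypothesis on $\Ga$ enters: via \cite[Corollary 2.3]{FGR}, coherence of $\Ga$ ensures that the short exact sequence of complexes underlying the triangle in \ref{ForTriMatAlg} is a sequence of bounded complexes in $\mmod\Ga$, which forces $M\otimes_R X_\bullet$ to be componentwise finitely presented over $S$. Once this technicality is settled the rest of the argument is essentially a translation of the path algebra proof to the two-object matrix setting.
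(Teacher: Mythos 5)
Your proposal is correct and follows essentially the same route as the paper, which itself just says to repeat the argument of the path-algebra theorem using $k^1,k^2$, their left adjoints, and the triangle from \ref{ForTriMatAlg}; your explicit treatment of why $M\otimes_R X_\bullet$ lands in $\D^b(\mmod S)$ is a reasonable gloss on a point the paper leaves implicit. The only cosmetic slip is that in the displayed triangle $\CX$ is the cone rather than the middle term, so one should rotate it to $k^1_\la(X_\bullet)\oplus k^2_\la(Y_\bullet)\rt\CX\rt\Sigma k^2_\la(M\otimes_R X_\bullet)$ before invoking the definition of $\ast$, exactly as the paper does in the quiver case.
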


\begin{proof}
It is clear that two functors
\[k^1: \D^b(\mmod \Ga) \rt \D^b(\mmod R) \ \ \ \ {\rm and} \ \ \ \ k^2: \D^b(\mmod \Ga) \rt \D^b(\mmod S) \]
are dense and so the first inequality follow from Lemma \ref{denceimage}. If
\[\max\{\dim\D^b(\mmod R), \dim\D^b(\mmod S)\}=\infty,\] we are done. So assume that it is finite. Let $X_1 \in \D^b(\mmod R)$ and $X_2 \in \D^b(\mmod S)$ be such that $\D^b(\mmod R)={\lan X_1\ran}_{n+1}$ and $\D^b(\mmod S)={\lan X_2\ran}_{n+1}$. Now one can apply the same method as in the proof of the above theorem, to show that $\D^b(\mmod \Ga)={\lan k^1_{\la}(X_1)\oplus k^2_{\la}(X_2)\ran}_{2n+2}$.
Hence the proof is complete.
\end{proof}

\section*{Acknowledgments}
We would like to thank Apostolos Beligiannis for reading carefully the first draft of this paper and for his useful comments and hints that improved our exposition. We also would like to thank Steffen Oppermann and Idun Reiten for their useful comments. The authors thank the Center of Excellence for Mathematics (University of Isfahan).

\end{document}